\newtheorem{theo}{Theorem}[section]
\newtheorem{lem}[theo]{Lemma}
\newtheorem{rem}[theo]{Remark}
\newtheorem{theoA}{Theorem}[section]
\newtheorem{coroA}[theoA]{Corollary}
\DeclareMathOperator*{\esssup}{ess\,sup}
\journal{Journal of Computational and Applied Mathematics}
\begin{document}
\begin{frontmatter}



\title{Sharp numerical inclusion of the best constant for embedding $H_{0}^{1}(\Omega) \hookrightarrow L^{p}(\Omega)$ on bounded convex domain}


\author[a]{Kazuaki Tanaka\corref{correspo}}
\cortext[correspo]{Corresponding author.}
\ead{imahazimari@fuji.waseda.jp}

\author[b]{Kouta Sekine}
\author[a]{Makoto Mizuguchi}
\author[b,c]{Shin'ichi Oishi}

\address[a]{Graduate School of Fundamental Science and Engineering, Waseda University, 3-4-1 Okubo, Shinjuku, Tokyo 169-8555, Japan}
\address[b]{Faculty of Science and Engineering, Waseda University, 3-4-1 Okubo, Shinjuku, Tokyo 169-8555, Japan}
\address[c]{CREST, JST, Japan}

\begin{abstract}
In this paper, we propose a verified numerical method for obtaining a sharp inclusion of the best constant for the embedding $H_{0}^{1}(\Omega) \hookrightarrow L^{p}(\Omega)$ on bounded convex domain in $\mathbb{R}^{2}$.
We estimate the best constant by computing the corresponding extremal function using a verified numerical computation.
Verified numerical inclusions of the best constant on a square domain are presented.
\end{abstract}

\begin{keyword}
computer-assisted proof \sep
elliptic problem \sep
embedding constant \sep
error bounds \sep
Sobolev inequality \sep
verified numerical computation



\end{keyword}

\end{frontmatter}

\section{Introduction}\label{sec/intro}
We consider the best constant for the embedding $H_{0}^{1}(\Omega) \hookrightarrow L^{p}(\Omega)$, i.e., the smallest constant $C_{p}\left(\Omega\right)$ that satisfies
\begin{align}
\left\|u\right\|_{L^{p}(\Omega)}\leq C_{p}(\Omega)\left\|u\right\|_{H_{0}^{1}\left(\Omega\right)},~~\forall u\in H_{0}^{1}\left(\Omega\right),\label{embedding}
\end{align}
where $\Omega\subset \mathbb{R}^{n}~(n=2,3,\cdots)$, $2< p<\infty$ if $n=2$, and $2< p\leq 2n/(n-2)$ if $n\geq 3$.
Here, $L^{p}\left(\Omega\right)$~$(1\leq p<\infty)$ is the functional space of $p$-th power Lebesgue integrable functions over $\Omega$.
Moreover, assuming that $H^{1}\left(\Omega\right)$ denotes first order $L^{2}$-Sobolev space on $\Omega$,
we define $H_{0}^{1}\left(\Omega\right):=\{u\in H^{1}\left(\Omega\right)\ :\ u=0\ \mathrm{on}\ \partial\Omega$ in the trace sense$\}$ with inner product $\left(\cdot,\cdot\right)_{H_{0}^{1}\left(\Omega\right)}:=\left(\nabla\cdot,\nabla\cdot\right)_{L^{2}\left(\Omega\right)}$ and norm $\left\|\cdot\right\|_{H_{0}^{1}\left(\Omega\right)}:=\left\|\nabla\cdot\right\|_{L^{2}\left(\Omega\right)}$.

Such constants are important in studies on partial differential equations (PDEs).	
In particular, our interest is in the applicability of these constants to verified numerical computation methods for PDEs, which originate from Nakao's \cite{nakao1988numerical} and Plum's work \cite{plum1991computer} and have been further developed by many researchers.
Such methods require explicit bounds for the embedding constant corresponding to a target equation at various points within them (see, e.g., \cite{nakao2001numerical, nakao2011numerical, plum2001computer, plum2008, takayasu2014remarks}).
Moreover, the precision in evaluating the embedding constants directly affects the precision of the verification results for the target equation.
Occasionally, rough estimates of the embedding constants lead to failure in the verification.
Therefore, accurately estimating such embedding constants is essential.

It is well known that the best constant in the classical Sobolev inequality has been proposed \cite{aubin1976, talenti1976} (see Theorem \ref{talentitheo}).
A rough upper bound of $C_{p}(\Omega)$ for a bounded domain $\Omega\subset \mathbb{R}^{n}$ can be obtained from the best constant by considering zero extension outside $\Omega$ (see Corollary \ref{roughboundtheo}).
Moreover, Plum \cite{plum2008} proposed another estimation formula that requires not the boundedness of $\Omega$ but an explicit lower bound for the minimum eigenvalue of $-\Delta$ (see Theorem \ref{plumembedding}), where $\Delta$ denotes the usual Laplace operator.
Although these formulas enable us to easily compute the upper bound of $C_{p}(\Omega)$, little is known about the best constant.

In this paper, we propose a numerical method for obtaining a verified sharp inclusion of the best constant $C_{p}\left(\Omega\right)$ that satisfies (\ref{embedding}) for a bounded convex domain $\Omega\subset \mathbb{R}^{2}$.
As a verified result, we prove the following theorem by using our method through a computer-assisted technique:

\begin{theo}\label{prop/embedding}
For the square $\Omega_{s}=\left(0,1\right)^{2}$, the smallest values of $C_{p}\left(\Omega_{s}\right)$~$(p=3,4,5,6,7)$ that satisfy $(\ref{embedding})$ are enclosed as follows:
\begin{align*}
C_{3}\left(\Omega_{s}\right)&\in[0.25712475017618,~0.25712475017620];\\
C_{4}\left(\Omega_{s}\right)&\in[0.28524446071925,~0.28524446071929];\\
C_{5}\left(\Omega_{s}\right)&\in[0.31058015094505,~0.31058015094512];\\
C_{6}\left(\Omega_{s}\right)&\in[0.33384042151102,~0.33384042151112];\\
C_{7}\left(\Omega_{s}\right)&\in[0.35547994288611,~0.35547994288634].
\end{align*}
\end{theo}
\begin{rem}\label{remark1}
Since it follows from a simple variable transformation that
\begin{align}
C_{p}((a,b)^{2})=(b-a)^{\frac{2}{p}}C_{p}(\Omega_{s}),
\end{align}
the values in Theorem \ref{prop/embedding} can be directly used for all squares $(a,b)^{2}$ $(-\infty<a<b<\infty)$ by multiplying them with $(b-a)^{2/p}$.
Moreover, these values can be applied to deriving an explicit upper bound of $C_{p}\left(\Omega\right)$ for a general domain $\Omega \subset (a,b)^{2}$ by considering zero extension outside $\Omega$, while the precision of the upper bound depends on the shape of $\Omega$.
\end{rem}
Hereafter, we replace the notation $C_{p}\left(\Omega\right)$ with $C_{p+1}\left(\Omega\right)$ ($1< p<\infty$ if $n=2$, and $1< p\leq (n+2)/(n-2)$ if $n\geq 3$) for the sake of convenience.
The smallest value of $C_{p+1}\left(\Omega\right)$ can be written as
\begin{align}
C_{p+1}\displaystyle \left(\Omega\right)=\sup_{u\in H_{0}^{1}\left(\Omega\right)\backslash \{0\}}\Phi\left(u\right),\label{bestconstant}
\end{align}
where $\Phi\left(u\right)=\left\|u\right\|_{L^{p+1}(\Omega)}/\left\|u\right\|_{H_{0}^{1}\left(\Omega\right)}$.
This variational problem is still the topic of current research (see, e.g., \cite{carroll2011interpolating, juhnke2015numerical} and the references therein).

The boundedness of $C_{p+1}\left(\Omega\right)$ in (\ref{bestconstant}) is ensured by considering zero extension outside $\Omega$ (see Corollary \ref{roughboundtheo}).
In addition, it is true that the supremum $C_{p+1}\left(\Omega\right)$ in (\ref{bestconstant}) can be realized by an extremal function in $H_{0}^{1}\left(\Omega\right)$.
A proof of this fact is sketched as follows.
Let $\{u_{i}\} \in H_{0}^{1}\left(\Omega\right)$ be a sequence such that $\left\|u_i\right\|_{H_{0}^{1}\left(\Omega\right)}=1$ and $\left\|u_{i}\right\|_{L^{p+1}(\Omega)}\rightarrow C_{p+1}(\Omega)$ as $i\rightarrow\infty$.
The Rellich--Kondrachov compactness theorem (see, e.g., \cite[Theorem 7.22] {gilbarg2001elliptic}) ensures that there exists a subsequence $\{u_{i_j}\}$ that converges to some $u^{*}$ in $L^{p+1}(\Omega)$.
Moreover, there exists a subsequence $\{u_{i{_k}}\} \subset \{u_{i{_j}}\}$ that converges to some $u' \in H^1_0(\Omega)$ in the weak topology of $H^1_0(\Omega)$ because $H^{1}_{0}(\Omega)$ is a Hilbert space.
Since $\{u_{i{_k}}\}$ converges to $u^{*}$ in $L^{p+1}(\Omega)$, it follows that $u^{*}=u'$.
Hence, $u^{*} \in H^1_0(\Omega) (\subset L_{p+1}(\Omega))$ and $\left\|u^{*}\right\|_{L^{p+1}(\Omega)}=C_{p+1}(\Omega)$.

Since $|u|\in H^{1}(\Omega)$ for all $u\in H^{1}(\Omega)$ (see, e.g.,  \cite[Lemma 7.6]{gilbarg2001elliptic}) and $\Phi(u^{*})=\Phi(|u^{*}|)$,
we are looking for the extremal function $u^{*}$ such that $u^{*}\geq0$ (in fact, the later discussion additionally proves that $u^{*}>0$ in $\Omega$).
The Euler-Lagrange equation for the variational problem is 
\begin{align}
\left\{\begin{array}{l l}
-\Delta u=lu^{p} &\mathrm{in}\ \Omega,\\
u=0 &\mathrm{on}\ \partial\Omega\\
\end{array}\right.\label{euler-lagrange}
\end{align}
with some positive constant $l$ (see, e.g., \cite{carroll2011interpolating} for a detailed proof).
Since $\Phi$ is scale-invariant (i.e., $\Phi(ku^{*})=\Phi(u^{*})$ for any $k>0$), it suffices to consider the case that $l=1$ for finding an extremal function $u^{*}$ of $\Phi$ (recall that we consider the case that $p>1$).
Moreover, the strong maximum principle ensures that nontrivial solutions $u$ to \eqref{euler-lagrange} such that $u \geq0$ in $\Omega$ are positive in $\Omega$.
Therefore, in order to find the extremal function $u^{*}$, we consider the problem of finding weak solutions to the following problem:
\begin{align}
\left\{\begin{array}{l l}
-\Delta u=u^{p} &\mathrm{in}\ \Omega,\\
u>0 &\mathrm{in}\ \Omega,\\
u=0 &\mathrm{on}\ \partial\Omega.\\
\end{array}\right.\label{positive/problem}
\end{align}
This problem has a unique solution if $\Omega\subset \mathbb{R}^{2}$ is bounded and convex \cite{lin1994uniqueness}.
Therefore, we can obtain an inclusion of $C_{p+1}\left(\Omega\right)$ as $\left\|u^{*}\right\|_{L^{p+1}(\Omega)}/\left\|u^{*}\right\|_{H_{0}^{1}\left(\Omega\right)}$ by enclosing the solution $u^{*}$ to \eqref{positive/problem} with verification.

Numerous numerical methods for verifying a solution to semilinear elliptic boundary value problems exist (e.g., \cite{nakao2001numerical, nakao2011numerical, plum2001computer, plum2008, takayasu2014remarks} along with related works \cite{nakao2005numerical,tanaka2014verified}).
Such methods enable a concrete ball containing exact solutions to elliptic equations to be obtained; this is typically in the sense of the norms $\left\|\cdot\right\|_{H_{0}^{1}\left(\Omega\right)}$ and $\left\|\cdot\right\|_{L^{\infty}\left(\Omega\right)}$, where $L^{\infty}\left(\Omega\right)$ is the functional space of Lebesgue measurable functions over $\Omega$ with the norm $\left\|u\right\|_{L^{\infty}\left(\Omega\right)}:=\esssup\{\left|u\left(x\right)\right|\,|\,x\in\Omega\}$ for $u\in L^{\infty}\left(\Omega\right)$.
In order to verify a solution to \eqref{positive/problem}, we first verify a solution to the following problem:
\begin{align}
\left\{\begin{array}{l l}
-\Delta u=\left|u\right|^{p-1}u &\mathrm{in}\ \Omega,\\
u=0 &\mathrm{on}\ \partial\Omega,\\
\end{array}\right.\label{abs/problem}
\end{align}
by combining the methods described in \cite{plum2001computer} and \cite{tanaka2014verified}.
Then, we prove the positiveness of the verified solution.
In exceptional cases, when $p$ is even, we directly consider problem \eqref{positive/problem}.
Indeed, the maximum principle ensures the positiveness of the solutions to the problem $-\Delta u=u^{p}~\mathrm{in}~\Omega,~u=0~\mathrm{on}~\partial\Omega$ a priori (note that $-\Delta u\geq 0$ when $p$ is even), except for the trivial solution $u\equiv 0$ in $\Omega$.


The remainder of this paper is organized as follows:
In Sections \ref{sec/positiveness} and \ref{sec/embedding}, we propose methods for proving the positiveness of the solution to (\ref{abs/problem}) and estimating the embedding constant $C_{p+1}\left(\Omega\right)$, respectively.
In Section \ref{sec/ex}, we present some numerical examples that yield Theorem \ref{prop/embedding}.

\section{Method for verifying positiveness}\label{sec/positiveness}
In this section, we propose a sufficient condition for the positiveness of the solution to (\ref{abs/problem}), which will be summarized in Theorem \ref{positive/theo}.
This theorem allows us to check for positiveness numerically.
Throughout this paper, for simplicity, we omit the expression ``almost everywhere'' for Lebesgue measurable functions. For example, we employ the notation $u>0$ in place of $u(x)>0$ a.e. $ x\in\Omega$.
We introduce the following lemma, which is required to prove Theorem \ref{positive/theo}.
\begin{lem}\label{main/lem}
Let $\Omega$ be a bounded domain in $\mathbb{R}^{n}$ $(n=1,2,3,\cdots)$.
All nontrivial weak solutions $0\not\equiv u\in H_{0}^{1}\left(\Omega\right)$ to $(\ref{positive/problem})$ satisfy
$$\esssup\{u\left(x\right)^{p-1}\,|\,x\in\Omega\}\geq\lambda_{1},$$
where $\lambda_{1}>0$ is the first eigenvalue of the following problem:
\begin{align}
\left(\nabla u,\nabla v\right)_{L^{2}\left(\Omega\right)}=\lambda\left(u,v\right)_{L^{2}\left(\Omega\right)},~~\forall v\in H_{0}^{1}\left(\Omega\right).\label{weak/eig/pro}
\end{align}
\end{lem}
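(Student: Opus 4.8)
The plan is to compare the weak formulation of (\ref{positive/problem}), tested against the first eigenfunction, with the eigenvalue equation (\ref{weak/eig/pro}), tested against $u$ itself. First I would recall the classical facts about the principal eigenpair of $-\Delta$ on a bounded domain: the problem (\ref{weak/eig/pro}) admits a smallest eigenvalue $\lambda_{1}>0$, and one may choose an associated eigenfunction $\phi_{1}\in H_{0}^{1}(\Omega)$ with $\phi_{1}>0$ in $\Omega$. I would also dispose of the trivial case at the outset: if $\esssup\{u(x)^{p-1}\,|\,x\in\Omega\}=+\infty$ there is nothing to prove, so from now on put $M:=\esssup\{u(x)^{p-1}\,|\,x\in\Omega\}$ and assume $M<\infty$; since $p>1$ and $u>0$ in $\Omega$, this forces $u\in L^{\infty}(\Omega)$, hence (as $\Omega$ is bounded) $u^{p}\in L^{2}(\Omega)$, so every integral appearing below is finite.

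Next I would insert $v=\phi_{1}$ into the weak form of (\ref{positive/problem}), namely $(\nabla u,\nabla v)_{L^{2}(\Omega)}=(u^{p},v)_{L^{2}(\Omega)}$, to obtain $(\nabla u,\nabla\phi_{1})_{L^{2}(\Omega)}=(u^{p},\phi_{1})_{L^{2}(\Omega)}$, and insert $v=u$ into (\ref{weak/eig/pro}) to obtain $(\nabla\phi_{1},\nabla u)_{L^{2}(\Omega)}=\lambda_{1}(\phi_{1},u)_{L^{2}(\Omega)}$. The two left-hand sides coincide, so subtracting the identities and writing $u^{p}=u^{p-1}u$ yields
\[
\int_{\Omega}\bigl(\lambda_{1}-u(x)^{p-1}\bigr)\,u(x)\,\phi_{1}(x)\,dx=0 .
\]

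Finally I would argue by contradiction. Suppose $M<\lambda_{1}$. Then $\lambda_{1}-u(x)^{p-1}\geq\lambda_{1}-M>0$ for a.e.\ $x\in\Omega$, while $u(x)\phi_{1}(x)>0$ for a.e.\ $x\in\Omega$ because both $u$ and $\phi_{1}$ are positive in $\Omega$; hence the integrand above is bounded below by $(\lambda_{1}-M)\,u\,\phi_{1}$ a.e., and its integral is at least $(\lambda_{1}-M)\int_{\Omega}u\,\phi_{1}\,dx>0$, contradicting the displayed identity. Therefore $M\geq\lambda_{1}$, which is exactly the assertion of the lemma.

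I do not anticipate a genuine difficulty here; the only points needing care are checking that the bilinear pairings are well defined (handled by reducing to the case $u\in L^{\infty}(\Omega)$, outside of which the conclusion is vacuous) and invoking the standard properties of the first eigenpair — positivity of $\lambda_{1}$ and of $\phi_{1}$ for a bounded domain — which are well known.
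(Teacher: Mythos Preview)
Your proof is correct and follows essentially the same approach as the paper: both test the weak form of (\ref{positive/problem}) against the first eigenfunction $\phi_{1}$ and the eigenvalue equation against $u$, then compare the resulting integrals using $u^{p}=u^{p-1}u$ and the positivity of $u\phi_{1}$. The only cosmetic difference is that the paper writes the conclusion as the direct chain $\int u^{p}\phi_{1}\leq M_{u}\int u\phi_{1}=\lambda_{1}^{-1}M_{u}\int u^{p}\phi_{1}$ and divides by the positive integral, whereas you rearrange to $\int(\lambda_{1}-u^{p-1})u\phi_{1}=0$ and argue by contradiction; your added remarks on integrability are a welcome bit of extra care.
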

\begin{proof}
Let $\phi_{1}\geq 0~(\phi_{1}\not\equiv 0)$ be the first eigenfunction corresponding to $\lambda_{1}$, which satisfies
\begin{align*}
\displaystyle \int_{\Omega}u^{p}\left(x\right)\phi_{1}\left(x\right)dx=\lambda_{1}\int_{\Omega}u\left(x\right)\phi_{1}\left(x\right)dx.
\end{align*}
We have
\begin{align*}
\displaystyle \int_{\Omega}u^{p}\left(x\right)\phi_{1}\left(x\right)dx&=\displaystyle \int_{\Omega}\left\{u\left(x\right)\right\}^{p-1}\left\{u\left(x\right)\phi_{1}\left(x\right)\right\}dx\\
&\displaystyle \leq \esssup\{u\left(x\right)^{p-1}\,|\,x\in\Omega\}\int_{\Omega}u\left(x\right)\phi_{1}\left(x\right)dx\\
&=\displaystyle \lambda_{1}^{-1}\esssup\{u\left(x\right)^{p-1}\,|\,x\in\Omega\}\int_{\Omega}u^{p}\left(x\right)\phi_{1}\left(x\right)dx,
\end{align*}
The positiveness of $\displaystyle \int_{\Omega}u^{p}\left(x\right)\phi_{1}\left(x\right)dx$ implies $\esssup\{u\left(x\right)^{p-1}\,|\,x\in\Omega\}\geq\lambda_{1}$.
\end{proof}

Using lemma \ref{main/lem}, we can prove the following theorem, which provides a sufficient condition for the positiveness of solutions to (\ref{abs/problem}).
\begin{theo}\label{positive/theo}
Let $\Omega$ be a bounded domain in $\mathbb{R}^{n}$ $(n=1,2,3,\cdots)$.
If a solution $u\in C^{2}\left(\Omega\right)\cap C\left(\overline{\Omega}\right)$ to $(\ref{abs/problem})$ is positive in a nonempty subdomain $\Omega'\subset\Omega$ and $\sup\{u_{-}\left(x\right)^{p-1}\,|\,x\in\Omega\}<\lambda_{1}\left(\Omega\right)$, then $u>0$ in the original domain $\Omega$; that is, $u$ is also a solution to $(\ref{positive/problem})$.
Here, $\lambda_{1}\left(\Omega\right)>0$ is the first eigenvalue of the problem $(\ref{weak/eig/pro})$ and $u_{-}$ is defined by
\begin{align*}
u_{-}\left(x\right):=
\left\{\begin{array}{l l}
-u\left(x\right), &u\left(x\right)<0,\\
0, &u\left(x\right)\geq 0.
\end{array}\right.
\end{align*}
\end{theo}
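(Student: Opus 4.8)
The plan is to argue by contradiction, reducing the claim to Lemma~\ref{main/lem}. Suppose the open, bounded set $\Omega_{-}:=\{x\in\Omega\,:\,u(x)<0\}$ were nonempty, and fix one of its connected components $\omega$. On $\omega$ we have $u=-u_{-}$, hence $|u|^{p-1}u=u_{-}^{p-1}(-u_{-})=-u_{-}^{p}$, so the equation in (\ref{abs/problem}) becomes $-\Delta u_{-}=u_{-}^{p}$ in $\omega$; moreover $u_{-}>0$ in $\omega$, and $u_{-}=0$ on $\partial\omega$ because $u$ is continuous and every point of $\partial\omega$ either lies on $\partial\Omega$ or is a point of $\Omega$ at which $u$ vanishes. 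Granting that $u_{-}|_{\omega}\in H_{0}^{1}(\omega)$, the restriction $u_{-}|_{\omega}$ is therefore a weak solution of problem (\ref{positive/problem}) with $\Omega$ replaced by $\omega$.

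I would then apply Lemma~\ref{main/lem} on $\omega$, which gives $\sup\{u_{-}(x)^{p-1}\,|\,x\in\omega\}\ge\lambda_{1}(\omega)$, where $\lambda_{1}(\omega)>0$ is the first eigenvalue of (\ref{weak/eig/pro}) posed on $\omega$. Since $\omega\subset\Omega$, extending test functions by zero in the Rayleigh-quotient characterization $\lambda_{1}(D)=\inf\{\|\nabla v\|_{L^{2}(D)}^{2}/\|v\|_{L^{2}(D)}^{2}\,:\,0\ne v\in H_{0}^{1}(D)\}$ yields the domain-monotonicity inequality $\lambda_{1}(\omega)\ge\lambda_{1}(\Omega)$. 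Combining, $\sup\{u_{-}(x)^{p-1}\,|\,x\in\Omega\}\ge\sup\{u_{-}(x)^{p-1}\,|\,x\in\omega\}\ge\lambda_{1}(\Omega)$, which contradicts the hypothesis $\sup\{u_{-}(x)^{p-1}\,|\,x\in\Omega\}<\lambda_{1}(\Omega)$. Hence $\Omega_{-}=\emptyset$, i.e.\ $u\ge0$ throughout $\Omega$.

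To finish, I would promote $u\ge0$ to $u>0$: now $|u|^{p-1}u=u^{p}\ge0$, so $u\in C^{2}(\Omega)$ is superharmonic on the connected domain $\Omega$ and is not identically zero (it is positive on $\Omega'$); the strong maximum principle then forces $u>0$ in $\Omega$. Consequently $|u|^{p-1}u=u^{p}$, so $u$ solves (\ref{positive/problem}) and, being already a weak solution of (\ref{abs/problem}), it is a weak solution of (\ref{positive/problem}), as asserted. The step I expect to be the main obstacle is the technical justification that $u_{-}$ restricted to a component of $\Omega_{-}$ is an admissible input for Lemma~\ref{main/lem} — membership in $H_{0}^{1}(\omega)$ and a vanishing trace on the possibly irregular boundary $\partial\omega$ — together with the domain-monotonicity bound $\lambda_{1}(\omega)\ge\lambda_{1}(\Omega)$; the hypothesis on $\Omega'$ enters only at the final, maximum-principle, step. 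A route that sidesteps Lemma~\ref{main/lem} is to test the weak form of (\ref{abs/problem}) directly with $u_{-}\in H_{0}^{1}(\Omega)$, which gives $\|\nabla u_{-}\|_{L^{2}(\Omega)}^{2}=\int_{\Omega}u_{-}^{p+1}\le(\sup_{\Omega}u_{-}^{p-1})\,\|u_{-}\|_{L^{2}(\Omega)}^{2}$, incompatible with $\lambda_{1}(\Omega)\|u_{-}\|_{L^{2}(\Omega)}^{2}\le\|\nabla u_{-}\|_{L^{2}(\Omega)}^{2}$ unless $u_{-}\equiv0$.
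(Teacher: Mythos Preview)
Your argument is essentially the same as the paper's: assume a negative region exists, restrict to a component $\omega$ where $-u$ solves (\ref{positive/problem}), invoke Lemma~\ref{main/lem} on $\omega$, and finish by domain monotonicity $\lambda_{1}(\omega)\ge\lambda_{1}(\Omega)$. The only organizational difference is that the paper invokes Hopf's lemma up front to exclude interior zero sets (so that ``not positive'' forces a genuine negative component), whereas you first establish $u\ge0$ and then apply the strong maximum principle at the end to upgrade to $u>0$; your closing alternative of testing (\ref{abs/problem}) directly with $u_{-}$ is a nice shortcut not in the paper.
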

\begin{proof}
Assume that $u$ is not always positive in $\Omega$.
The strong maximum principle ensures that $u$ is also not always nonnegative in $\Omega$.
In other words, there exists a nonempty subdomain $\Omega''\subset\Omega\backslash\Omega'$ such that $u<0$ in $\Omega''$ and $u=0$ on $\partial\Omega''$.
Therefore, the restricted function $v:=-u|_{\Omega''}$ can be regarded as a solution to
\begin{align*}
\left\{\begin{array}{l l}
-\Delta v=v^{p} &\mathrm{i}\mathrm{n}\ \Omega'',\\
v>0 &\mathrm{i}\mathrm{n}\ \Omega'',\\
v=0 &\mathrm{on}\ \partial\Omega''.\\
\end{array}\right.
\end{align*}
From Lemma \ref{main/lem}, we have that
\begin{align*}
\sup_{x\in\Omega}u_{-}\left(x\right)^{p-1}&\geq\sup_{x\in\Omega''}v\left(x\right)^{p-1}
\geq\lambda_{1}\left(\Omega''\right),
\end{align*}
where $\lambda_{1}\left(\Omega''\right)$ is the first eigenvalue of $(\ref{weak/eig/pro})$, with the notational replacement $\Omega=\Omega''$.
Since the inclusion $\Omega''\subset\Omega$ ensures that all functions in $H_{0}^{1}\left(\Omega''\right)$ can be regarded as functions in $H_{0}^{1}\left(\Omega\right)$ by considering the zero extension outside $\Omega''$, the inequality $\lambda_{1}\left(\Omega''\right)\geq\lambda_{1}\left(\Omega\right)$ follows.
This contradicts the assumption $\sup\{u_{-}\left(x\right)^{p-1}\,|\,x\in\Omega\}<\lambda_{1}\left(\Omega\right)$.
\end{proof}
\begin{rem}\label{rem1}
i)~~For each $h\in L^{2}\left(\Omega\right)$, the problem
\begin{align*}
\left\{\begin{array}{l l}
-\Delta u=h &\mathrm{i}\mathrm{n}\ \Omega,\\
u=0 &\mathrm{on}\ \partial\Omega\\
\end{array}\right.
\end{align*}
has a unique solution $u\in H^{2}(\Omega)$, such as when $\Omega$ is a bounded convex domain with a piecewise $C^{2}$ boundary
 $($see, e.g., {\rm \cite[Section 3.3]{grisvard2011elliptic}}$)$.
Therefore, the so-called bootstrap argument ensures that a weak solution $u\in H_{0}^{1}\left(\Omega\right)$ to \eqref{abs/problem} on such a domain $\Omega$ is in $C^{\infty}\left(\Omega\right)(\subset C^{2}\left(\Omega\right))$.

ii)~~The first eigenvalue of the problem $(\ref{weak/eig/pro})$ can be numerically estimated by, e.g., the method in {\rm \cite{liu2013verified}}, which allows us to concretely evaluate the eigenvalues of $(\ref{weak/eig/pro})$ on polygonal domains.
\end{rem}

\section{Method for estimating the embedding constant}\label{sec/embedding}
In this section, we propose a method for estimating the embedding constant $C_{p+1}\left(\Omega\right)$ defined in $(\ref{bestconstant})$.
Hereafter, $\overline{B}\left(x,r;\ \|\cdot\|\right)$ denotes the closed ball whose center is $x$ and whose radius is $r\geq 0$ in the sense of the norm $\|\cdot\|$.
The following theorem provides an explicit estimation of the embedding constant from a verified solution to $(\ref{positive/problem})$.

\begin{theo}\label{theo/embedding}
Let $\Omega\subset \mathbb{R}^{2}$ be a bounded convex domain.
If there exists a solution to $(\ref{positive/problem})$ in a closed ball $\overline{B}(\hat{u},r\ ;\ \|\cdot\|_{H_{0}^{1}(\Omega)})$ with $\hat{u}\in H_{0}^{1}\left(\Omega\right)$ satisfying $\left\|\hat{u}\right\|_{H_{0}^{1}\left(\Omega\right)}>2r$, then the embedding constant $C_{p+1}\left(\Omega\right)$ defined in $(\ref{bestconstant})$ is estimated as
\begin{align*}
\displaystyle \frac{\left\|\hat{u}\right\|_{L^{p+1}\left(\Omega\right)}}{\left\|\hat{u}\right\|_{H_{0}^{1}\left(\Omega\right)}}\leq C_{p+1}\left(\Omega\right)\leq\frac{\left\|\hat{u}\right\|_{L^{p+1}\left(\Omega\right)}}{\left\|\hat{u}\right\|_{H_{0}^{1}\left(\Omega\right)}-2r}.
\end{align*}
\end{theo}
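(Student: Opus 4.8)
The plan is to exploit the fact that, by (\ref{bestconstant}), $C_{p+1}(\Omega)=\sup_{u}\Phi(u)$ is realized by the unique extremal $u^{*}$, which (after normalization) is precisely the solution to (\ref{positive/problem}); hence $C_{p+1}(\Omega)=\Phi(u^{*})=\|u^{*}\|_{L^{p+1}(\Omega)}/\|u^{*}\|_{H_{0}^{1}(\Omega)}$. Since $u^{*}$ lies in the verified ball $\overline{B}(\hat{u},r\,;\,\|\cdot\|_{H_{0}^{1}(\Omega)})$, the task reduces to two one-sided estimates: bounding $\Phi(u^{*})$ from above and from below in terms of $\hat{u}$ and $r$, using only $\|u^{*}-\hat{u}\|_{H_{0}^{1}(\Omega)}\le r$.

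First I would establish the \emph{lower} bound. Because $C_{p+1}(\Omega)$ is a supremum of $\Phi$ over all of $H_{0}^{1}(\Omega)$, simply plugging in the particular function $\hat{u}$ gives $C_{p+1}(\Omega)\ge\Phi(\hat{u})=\|\hat{u}\|_{L^{p+1}(\Omega)}/\|\hat{u}\|_{H_{0}^{1}(\Omega)}$, which is exactly the left-hand inequality. This step is immediate and requires no estimate on $r$.

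Next I would handle the \emph{upper} bound, which is the substantive part. Write $u^{*}=\hat{u}+w$ with $\|w\|_{H_{0}^{1}(\Omega)}\le r$. For the numerator, the key point is that $\|u^{*}\|_{L^{p+1}(\Omega)}\le\|\hat{u}\|_{L^{p+1}(\Omega)}+\|w\|_{L^{p+1}(\Omega)}\le\|\hat{u}\|_{L^{p+1}(\Omega)}+C_{p+1}(\Omega)\,r$ by the triangle inequality in $L^{p+1}$ followed by the defining embedding inequality (\ref{embedding}) applied to $w$. For the denominator, the reverse triangle inequality gives $\|u^{*}\|_{H_{0}^{1}(\Omega)}\ge\|\hat{u}\|_{H_{0}^{1}(\Omega)}-r$. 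Combining,
\begin{align*}
C_{p+1}(\Omega)=\frac{\|u^{*}\|_{L^{p+1}(\Omega)}}{\|u^{*}\|_{H_{0}^{1}(\Omega)}}\le\frac{\|\hat{u}\|_{L^{p+1}(\Omega)}+C_{p+1}(\Omega)\,r}{\|\hat{u}\|_{H_{0}^{1}(\Omega)}-r}.
\end{align*}
This is an inequality in which $C_{p+1}(\Omega)$ appears on both sides; solving it for $C_{p+1}(\Omega)$ — which is legitimate provided the coefficient $\|\hat{u}\|_{H_{0}^{1}(\Omega)}-r-r=\|\hat{u}\|_{H_{0}^{1}(\Omega)}-2r$ multiplying $C_{p+1}(\Omega)$ is positive, i.e. exactly the hypothesis $\|\hat{u}\|_{H_{0}^{1}(\Omega)}>2r$ — yields $C_{p+1}(\Omega)\,(\|\hat{u}\|_{H_{0}^{1}(\Omega)}-2r)\le\|\hat{u}\|_{L^{p+1}(\Omega)}$, which rearranges to the claimed right-hand bound $C_{p+1}(\Omega)\le\|\hat{u}\|_{L^{p+1}(\Omega)}/(\|\hat{u}\|_{H_{0}^{1}(\Omega)}-2r)$.

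The only genuinely delicate point is the self-referential step: the bound on $\|w\|_{L^{p+1}(\Omega)}$ naturally comes out in terms of $C_{p+1}(\Omega)$ itself (that constant being, by definition, the sharp bound for the $H_{0}^{1}\hookrightarrow L^{p+1}$ embedding), so one must be careful to solve the resulting linear inequality in the correct direction and to invoke $\|\hat{u}\|_{H_{0}^{1}(\Omega)}>2r$ to guarantee the denominator stays positive. Everything else — the uniqueness of the extremal reducing it to the solution of (\ref{positive/problem}), and the two triangle inequalities — is routine; I would also note that bounded convex planar domains are covered so that the relevant regularity and uniqueness results (the unique positive solution of (\ref{positive/problem}), cf. \cite{lin1994uniqueness}) apply and $\hat{u}$ can indeed be taken in $H_{0}^{1}(\Omega)$.
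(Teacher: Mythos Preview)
Your proof is correct and follows essentially the same route as the paper: the lower bound by evaluating $\Phi$ at $\hat{u}$, and the upper bound by writing the unique extremal as $\hat{u}$ plus a perturbation of $H_{0}^{1}$-norm at most $r$, applying the triangle inequality in $L^{p+1}$ together with the embedding bound $\|w\|_{L^{p+1}}\le C_{p+1}(\Omega)r$ and the reverse triangle inequality in $H_{0}^{1}$, then solving the resulting self-referential linear inequality under the hypothesis $\|\hat{u}\|_{H_{0}^{1}}>2r$. The paper's argument is identical in substance (it merely writes the perturbation as $rv$ with $\|v\|_{H_{0}^{1}}=1$ instead of your $w$).
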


\begin{proof}
It is clear that $\left\|\hat{u}\right\|_{L^{p+1}\left(\Omega\right)}/\left\|\hat{u}\right\|_{H_{0}^{1}\left(\Omega\right)}$ is a lower bound of $C_{p+1}\left(\Omega\right)$.
A solution to $(\ref{positive/problem})$ is unique when $\Omega\subset \mathbb{R}^{2}$ is bounded and convex \cite{lin1994uniqueness}.
Therefore, the ratio $\left\|u\right\|_{L^{p+1}(\Omega)}/\left\|u\right\|_{H_{0}^{1}\left(\Omega\right)}$ is maximized by the solution $u$ to $(\ref{positive/problem})$.
By writing the solution to \eqref{positive/problem} as $\hat{u}+rv$ with $v\in H_{0}^{1}\left(\Omega\right),\ \left\|v\right\|_{H_{0}^{1}\left(\Omega\right)}\leq 1$, we have that
\begin{align*}
C_{p+1}\displaystyle \left(\Omega\right)=\frac{\left\|\hat{u}+rv\right\|_{L^{p+1}\left(\Omega\right)}}{\left\|\hat{u}+rv\right\|_{H_{0}^{1}\left(\Omega\right)}}\leq\frac{\left\|\hat{u}\right\|_{L^{p+1}\left(\Omega\right)}+rC_{p+1}\left(\Omega\right)}{\left\|\hat{u}\right\|_{H_{0}^{1}\left(\Omega\right)}-r}.
\end{align*}
In other words, it follows that
\begin{align*}
\left(\left\|\hat{u}\right\|_{H_{0}^{1}\left(\Omega\right)}-2r\right)C_{p+1}\left(\Omega\right)\leq\left\|\hat{u}\right\|_{L^{p+1}\left(\Omega\right)}.
\end{align*}
Hence, when $\left\|\hat{u}\right\|_{H_{0}^{1}\left(\Omega\right)}>2r,\ \left\|\hat{u}\right\|_{L^{p+1}\left(\Omega\right)}/(\left\|\hat{u}\right\|_{H_{0}^{1}\left(\Omega\right)}-2r)$ becomes an upper bound of $C_{p+1}\left(\Omega\right)$.
\end{proof}

\begin{rem}
Theorem $\ref{theo/embedding}$ can be naturally applied to the case of $n\geq 3$ under suitable conditions.
For example, if $\Omega$ is a convex symmetric domain in $\mathbb{R}^{n}\ (n\geq 3)$ and $1<p<(n+2)/(n-2)$, \eqref{positive/problem} has only one solution {\rm \cite{grossi2000uniqueness, pacella2005uniqueness}}.
\end{rem}
\section{Numerical example}\label{sec/ex}
In this section, we present some numerical examples where the best values of the embedding constants on the square domain $\Omega_{s}=\left(0,1\right)^{2}$ are estimated to yield Theorem \ref{prop/embedding}.
The upper bounds for the embedding constants on the L-shaped domain $\Omega_{L}=(0,2)^{2} \backslash [1,2]^{2}$ through the application of Theorem \ref{prop/embedding} are also presented.
All computations were carried out on a computer with Intel Xeon E7-4830 2.20 GHz$\times$40 processors, 2 TB RAM, CentOS 6.6, and MATLAB 2012b.
All rounding errors were strictly estimated by using toolboxes for the verified numerical computations: the INTLAB version 9 \cite{rump1999book} and KV library version 0.4.16 \cite{kashiwagikv}.
Therefore, the accuracy of all results was guaranteed mathematically.

We consider the cases where $p=2,\,3,\,4,\,5,\,{\rm and}~6$, which correspond to the critical point problems for embedding constants $C_{p+1}(\Omega)$.
We computed approximate solutions $\hat{u}$ to \eqref{abs/problem}, which are displayed in Fig.~\ref{pic}, with Legendre polynomials, i.e., we constructed $\hat{u}$ as
\begin{align}
\displaystyle \hat{u}=\sum_{i,j=1}^{N}u_{i,j}\phi_{i}\phi_{j},~~u_{i,j}\in \mathbb{R},
\end{align}
where each $\phi_{i}$ is defined by
\begin{align}
\displaystyle \phi_{n}(x)=\frac{1}{n(n+1)}x(1-x)\frac{dP_{n}}{dx}(x),~~n=1,2,3,\cdots
\end{align}
with the Legendre polynomials $P_{n}$ defined by
\begin{align}
P_{n}=\displaystyle \frac{(-1)^{n}}{n!}\left(\frac{d}{dx}\right)^{n}x^{n}(1-x)^{n},~~n=0,1,2,\cdots.
\end{align}
We used \cite[Theorem 2.3]{kimura1999on} to obtain a concrete interpolation error bound for the basis $\left\{\phi_{i}\right\}_{i=1}^{N}$.
Other important properties of the basis are also discussed in \cite{kinoshita2015recurrence}.
We then proved the existence of solutions $u$ to \eqref{abs/problem} in an $H_{0}^{1}$-ball $\overline{B}(\hat{u},r_{1};\ \|\cdot\|_{H_{0}^{1}\left(\Omega_{s}\right)})$ and an $L^{\infty}$-ball $\overline{B}(\hat{u},r_{2};\ \|\cdot\|_{L^{\infty}\left(\Omega_{s}\right)})$, both centered around the approximations $\hat{u}$; this was done by combining the methods described in \cite{plum2001computer} and \cite{tanaka2014verified}.
On the basis of the ``Existence and Enclosure Theorem'' on p.~154 in \cite{plum2001computer}, we obtained the $H_{0}^{1}$-ball.
The required constants $\delta$ and $K$ and function $G$ in the theorem were computed as follows:
\begin{itemize}
\setlength{\parskip}{1pt}
\setlength{\itemsep}{1pt}
\item $\delta$ was computed as $\delta=C_{2}(\Omega_{s})\|\Delta\hat{u}+\hat{u}^{p}\|_{L^{2}(\Omega_{s})}$;
\item $K$ was computed by the method described in \cite{tanaka2014verified};
\item The method described in \cite[Section~3.4]{plum2009computer} was used for the explicit selection of $G$.
\end{itemize}
By using \cite[Theorem 4.1]{plum2001computer}, we also obtained the $L^{\infty}$-ball.
Note that the verified solution has the regularity to be in $C^{2}\left(\Omega_{s}\right)\cap C\left(\overline{\Omega_{s}}\right)$ regardless of the regularity of the approximations $\hat{u}$ owing to the argument in Remark \ref{rem1} i).
Table \ref{veri/result} presents the verification results, which ensure the positiveness of the verified solutions to \eqref{abs/problem} centered around $\hat{u}$ for cases $p=3~{\rm and}~5$ owing to the condition $\displaystyle \sup\{(u_{-}\left(x\right))^{p-1}\,|\, x\in\Omega_{s}\}<\lambda_{1}(=2\pi)$.
Here, the upper bounds of $\displaystyle \sup\{(u_{-}\left(x\right))^{p-1}\,|\, x\in\Omega_{s}\}$ were calculated by $\left(\left|\min\{\hat{u}\left(x\right)\ |\ x\in\Omega_{s}\}\right|+r_{2}\right)^{p-1}$ with verification.
The last column in the table presents intervals containing $C_{p+1}\left(\Omega_{s}\right)$, e.g., $1.23_{456}^{789}$ represents the interval [1.23456,1.23789].
These intervals yield the results in Theorem \ref{prop/embedding}.
Table \ref{compare} compares the lower and upper bounds derived by our method, the upper bounds derived by Corollary \ref{roughboundtheo}, and the upper bounds derived by Plum's formula \cite{plum2008} (Theorem \ref{plumembedding}).

In addition, we applied the results of Theorem \ref{prop/embedding} to estimate the upper bounds of the embedding constants on $\Omega_{L}$.
Since $\Omega_{L} \subset (0,2)^2$, which is the smallest square that encloses $\Omega_{L}$, $C_{p}(\Omega_{L})$ is bounded by $2^{2/p}C_{p}(\Omega_{s})$ owing to the discussion in Remark \ref{remark1}.
Table \ref{compareL} compares $2^{2/p}C_{p}(\Omega_{s})$ derived by our method, the upper bounds for $C_{p}(\Omega_{L})$ derived by Corollary \ref{roughboundtheo}, and the upper bounds for $C_{p}(\Omega_{L})$ derived by Theorem \ref{plumembedding}.
Theorem \ref{plumembedding} requires a concrete value for the minimum eigenvalue of $-\Delta$.
Therefore, we employed the result of $\lambda_{1} \geq 9.5585$ presented in \cite[Table 5.1]{liu2014high}.

\begin{table}[!h]
\caption{Verification results for the cases $p=2,\,3,\,4,\,5,\,{\rm and}~6$ on $\Omega_{s}=(0,1)^{2}$.}
\label{veri/result}
\begin{center}
 \renewcommand\arraystretch{1.3}
 \footnotesize
 \begin{tabular}{c|ccccl}
 \hline
 $p$&
	$N$&
 $r_{1}$&
 $r_{2}$&
 $\displaystyle \sup\,(u_{-}(x))^{p-1}$&
 \multicolumn{1}{c}{$C_{p+1}\left(\Omega_{s}\right)$}\\
 \hline
 \hline
 2&
	100&
	1.289071378e-12&
	3.643032347e-12& 
	-&
 $0.257124750176_{18}^{20}$\\
 3&
	150&
	6.636469152e-13&
	4.363745213e-12& 
	1.904227228e-23& 
 $0.2852444607192_{5}^{9}$\\
 4&
	150&
	5.841283013e-13&
	2.002871834e-11& 
	-&
 $0.310580150945_{05}^{12}$\\
 5&
	150&
	7.088374332e-13&
	1.724519836e-10& 
	8.844489601e-40& 
 $0.333840421511_{02}^{12}$\\
 6&
	200&
	1.310581865e-12&
	4.769644376e-08& 
	-&
 $0.355479942886_{11}^{34}$\\
 \hline
 \end{tabular}
\end{center}
\end{table}

\begin{table}[h]
\caption{Estimates of $C_{p}\left(\Omega_{s}\right)$ derived by our method, Corollary \ref{roughboundtheo}, and Theorem \ref{plumembedding}.}
\label{compare}
\begin{center}
 \renewcommand\arraystretch{1.3}
 \footnotesize
 \begin{tabular}{c|lcc}
 \hline
 $C_{p}\left(\Omega_{s}\right)$&
 \multicolumn{1}{c}{Our method}&
 Corollary \ref{roughboundtheo}&
 Theorem \ref{plumembedding}\\
 \hline
 \hline
 $C_{3}\left(\Omega_{s}\right)$&
 $0.257124750176_{18}^{20}$&
 $0.27991104681668$&
 $0.32964899322075$\\
 $C_{4}\left(\Omega_{s}\right)$&
 $0.2852444607192_{5}^{9}$&
 $0.31830988618380$&
 $0.39894228040144$\\
 $C_{5}\left(\Omega_{s}\right)$&
 $0.310580150945_{05}^{12}$&
 $0.35780388458051$&
 $0.48909030972535$\\
 $C_{6}\left(\Omega_{s}\right)$&
 $0.333840421511_{02}^{12}$&
 $0.39585399866620$&
 $0.55266945714001$\\
 $C_{7}\left(\Omega_{s}\right)$&
 $0.355479942886_{11}^{34}$&
 $0.43211185419351$&
	$0.63763213907292$\\
 \hline
 \end{tabular}
\end{center}
\end{table}

\begin{table}[!h]
\caption{Same as Table \ref{compare} but for $\Omega_{L}=(0,2)^{2} \backslash [1,2]^{2}$.}
\label{compareL}
\begin{center}
 \renewcommand\arraystretch{1.3}
 \footnotesize
 \begin{tabular}{c|ccc}
 \hline
 $C_{p}\left(\Omega_{L}\right)$&
 \multicolumn{1}{c}{Our method ($2^{2/p}C_{p}(\Omega_{s}))$}&
 Corollary \ref{roughboundtheo}&
 Theorem \ref{plumembedding}\\
 \hline
 \hline
 $C_{3}\left(\Omega_{L}\right)$&
 $0.51424950035240$&
 $0.40370158699565$&
 $0.41978967493887$\\
 $C_{4}\left(\Omega_{L}\right)$&
 $0.45279735701391$&
 $0.41891936927236$&
 $0.47823908300428$\\
 $C_{5}\left(\Omega_{L}\right)$&
 $0.43922666167046$&
 $0.44572736933656$&
 $0.56542767015609$\\
 $C_{6}\left(\Omega_{L}\right)$&
 $0.44050507711968$&
 $0.47539569585243$&
 $0.62367087563741$\\
 $C_{7}\left(\Omega_{L}\right)$&
 $0.44787666285793$&
 $0.50554097277928$&
 $0.70723155088841$\\
 \hline
 \end{tabular}
\end{center}
\end{table}

\newcommand{\sizee}{0.5\hsize}
\newcommand{\vminus}{\vspace{-3mm}}
\begin{figure}[p]
\begin{minipage}{\sizee}
	\begin{center}
	\vminus\includegraphics[height=50 mm]{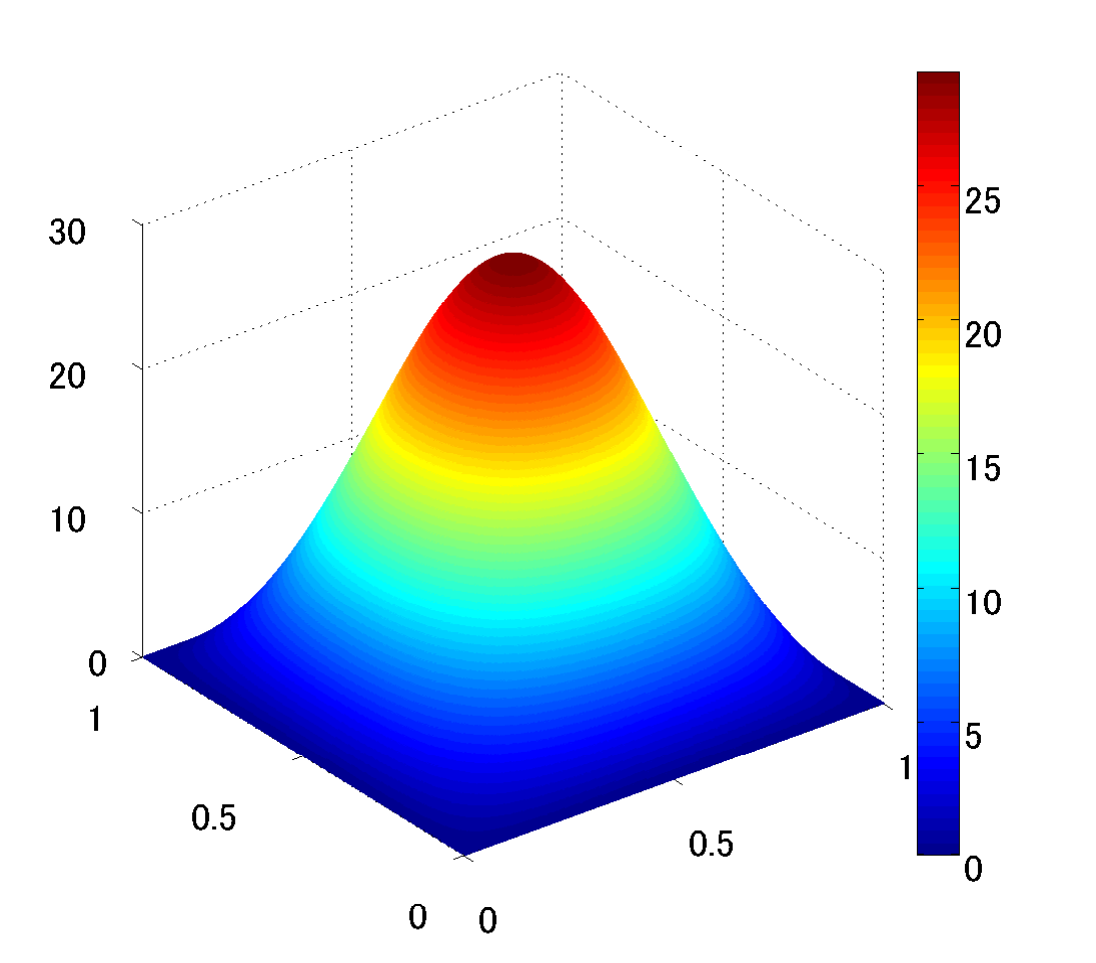}\\
	\footnotesize$p=2$,~~$\displaystyle \max_{x\in\Omega_{s}}\hat{u}(x)\approx 29.2571$
	\end{center}
	~
\end{minipage}
\begin{minipage}{\sizee}
	\begin{center}
	 \vminus\includegraphics[height=50 mm]{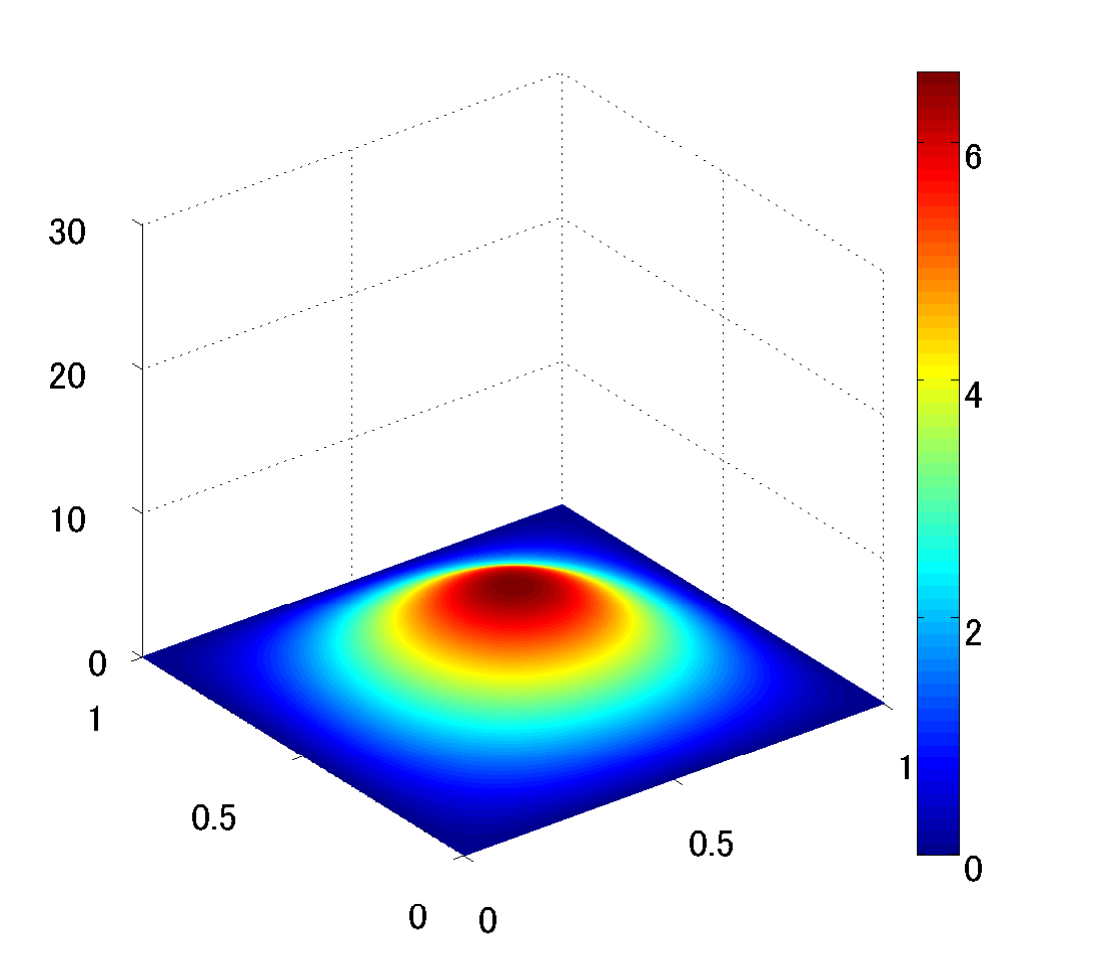}\\
	 \footnotesize$p=3$,~~$\displaystyle \max_{x\in\Omega_{s}}\hat{u}(x)\approx 6.6232$
	\end{center}
	~
\end{minipage}
\begin{minipage}{\sizee}
	\begin{center}
	 \vminus\includegraphics[height=50 mm]{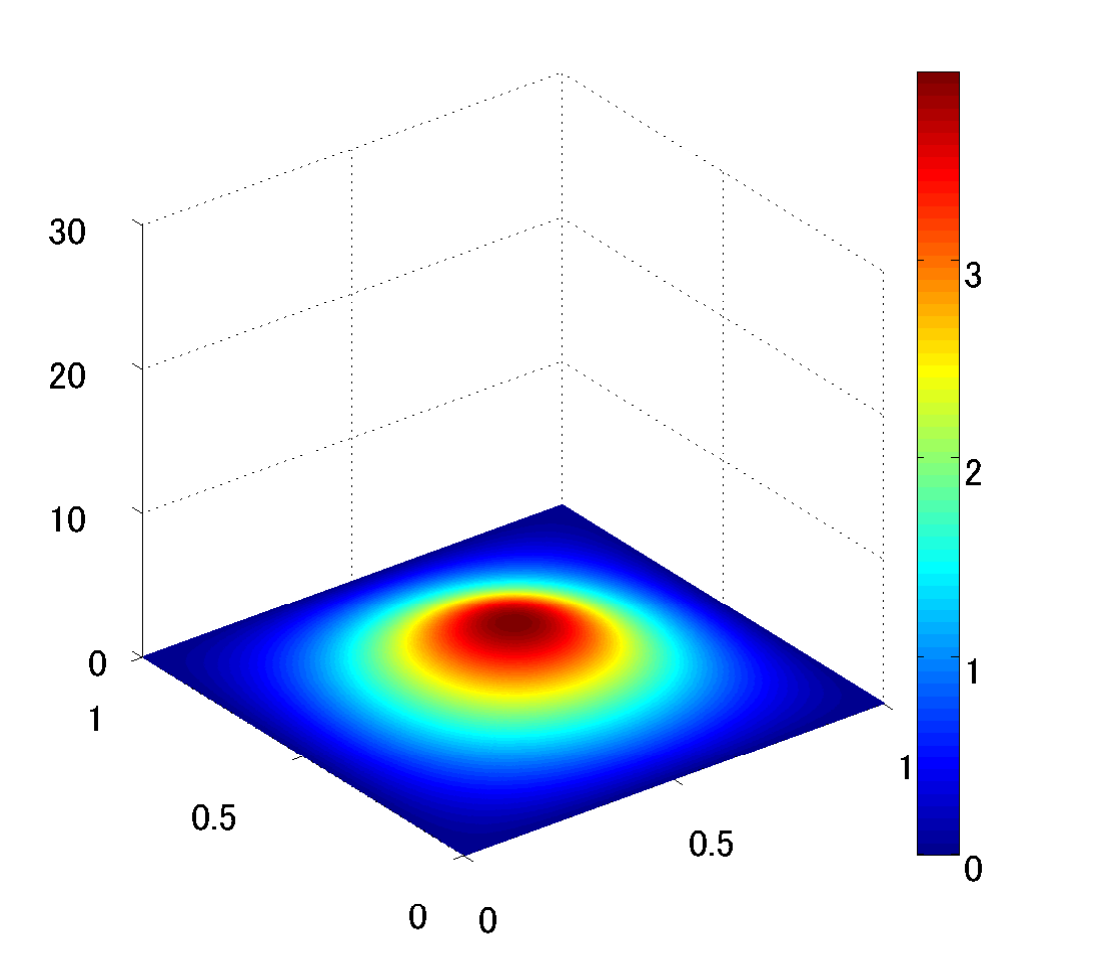}\\
	 \footnotesize$p=4$,~~$\displaystyle \max_{x\in\Omega_{s}}\hat{u}(x)\approx 4.0491$
	\end{center}
	~
\end{minipage}
\begin{minipage}{\sizee}
	\begin{center}
	 \vminus\includegraphics[height=50 mm]{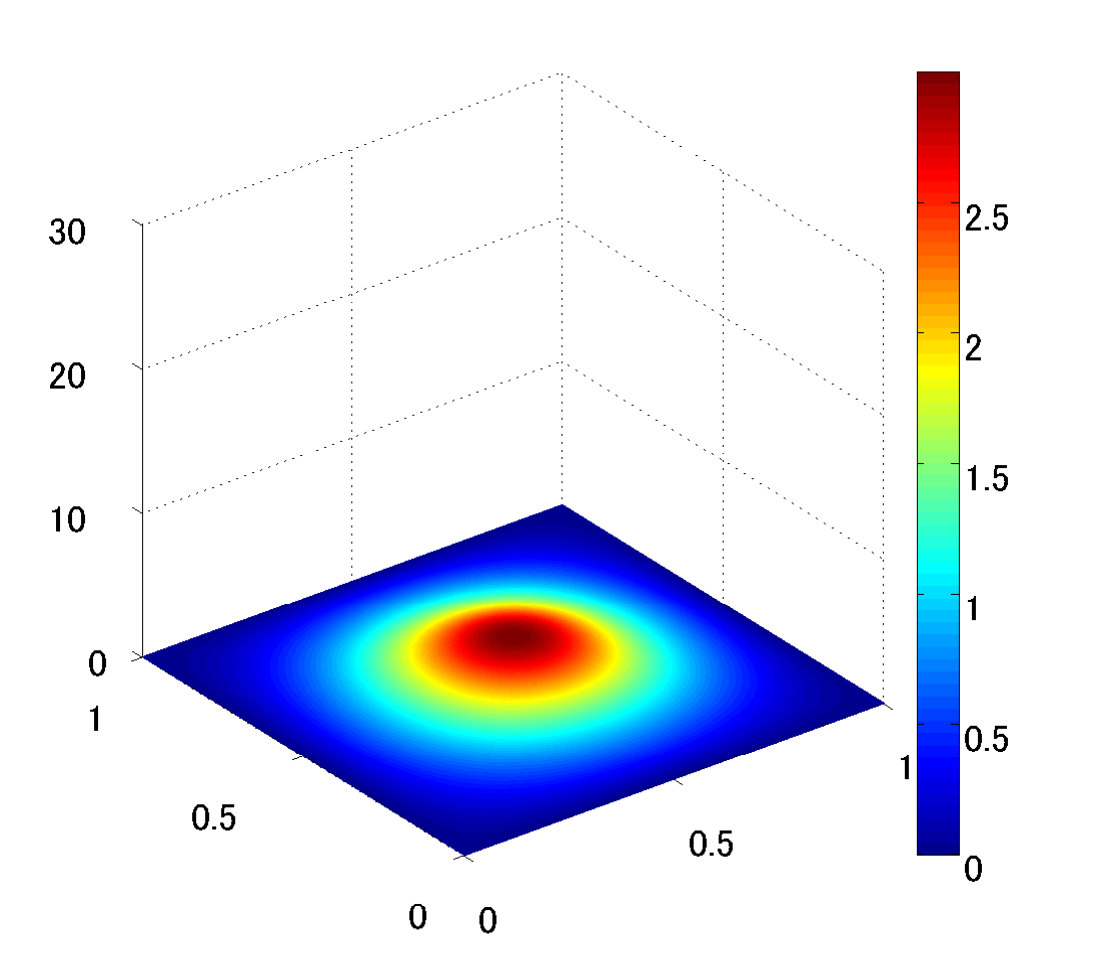}\\
	 \footnotesize$p=5$,~~$\displaystyle \max_{x\in\Omega_{s}}\hat{u}(x)\approx 3.1721$
	\end{center}
	~
\end{minipage}
\begin{minipage}{\sizee}
	\begin{center}
	 \vminus\includegraphics[height=50 mm]{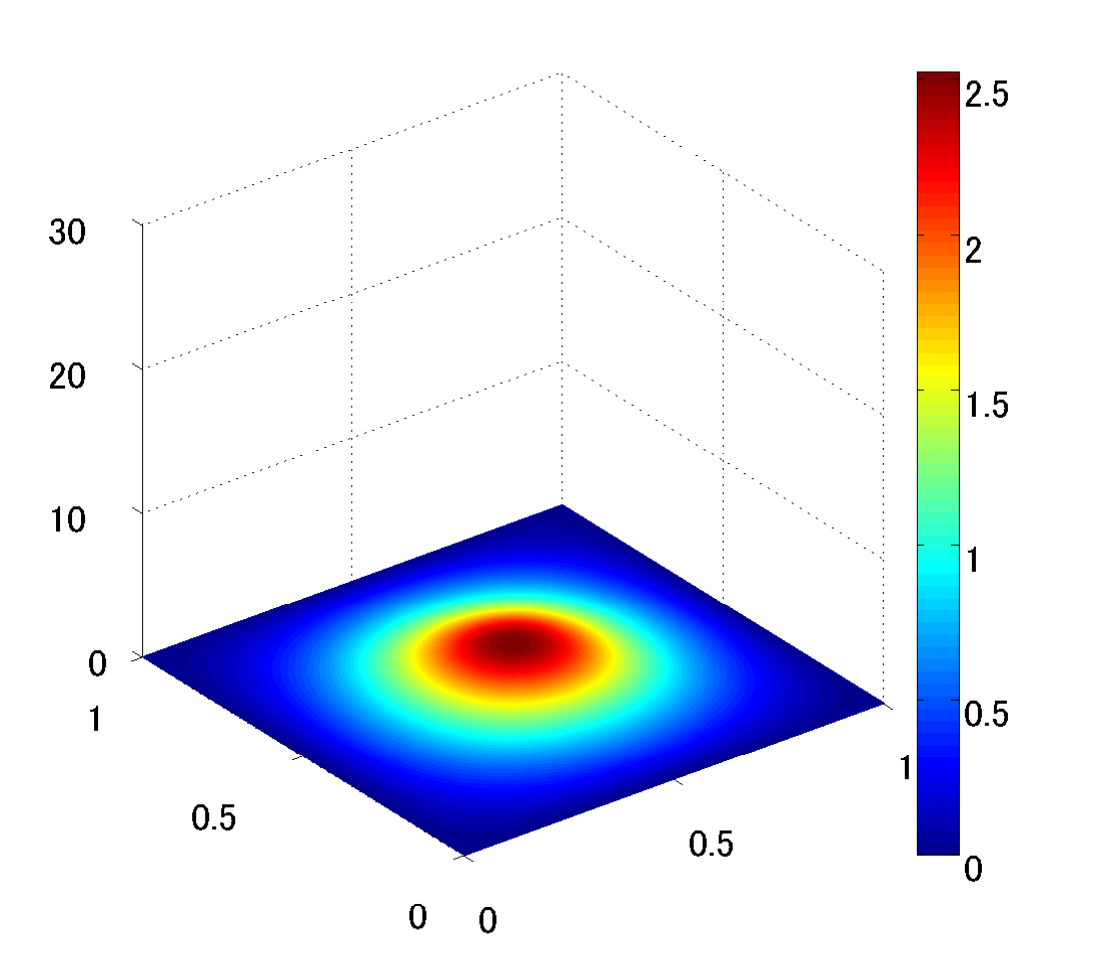}\\
	 \footnotesize$p=6$,~~$\displaystyle \max_{x\in\Omega_{s}}\hat{u}(x)\approx 2.7435$
	\end{center}
	~
\end{minipage}
\caption{Approximate solutions to $(\ref{abs/problem})$ in $\Omega_{s}$ for $p=2,\,3,\,4,\,5,\,{\rm and}~6$.}
\label{pic}
\end{figure}
\section{Conclusion}
In this paper, we proposed a numerical method for obtaining a sharp inclusion of the best constant $C_{p}\left(\Omega\right)$ that satisfies \eqref{embedding} for a bounded convex domain $\Omega$.
We obtained inclusions of $C_{p}\left(\Omega\right)$ on the basis of Theorem \ref{theo/embedding}.
Since Theorem \ref{theo/embedding} requires a concrete inclusion of the extremal function corresponding to target $C_{p}\left(\Omega\right)$, we obtained an inclusion of the function by computing a solution to problem \eqref{positive/problem} with verification.
In order to verify the solutions to \eqref{positive/problem}, we first verified solutions to \eqref{abs/problem} and then proved their positiveness by using Theorem \ref{positive/theo}, except for the case where the positiveness of the solutions was ensured a priori.
The accuracy of all results, e.g., those in Theorem \ref{prop/embedding}, was mathematically guaranteed by using toolboxes for verified numerical computations \cite{rump1999book,kashiwagikv}.
In future work, we would like to present numerical inclusions for the best value of the embedding constant for other cases, e.g., when $p$ is a fraction or $\Omega$ is a more complicated polygonal domain.
\appendix
\section{Simple bounds for the embedding constant}
The following theorem provides the best constant in the classical Sobolev inequality.

\begin{theoA}[T.~Aubin \cite{aubin1976} and G.~Talenti \cite{talenti1976}]\label{talentitheo}
Let $u$ be any function in $W^{1,q}\left(\mathbb{R}^{n}\right)\ (n=2,3,\cdots)$.
Moreover, let $q$ be any real number such that $1<q<n$, and let $p=nq/\left(n-q\right)$.
Then,
\begin{align*}
\left(\int_{\mathbb{R}^{n}}\left|u(x)\right|^{p}dx\right)^{\frac{1}{p}}\leq T_{p}\left(\int_{\mathbb{R}^{n}}\left|\nabla u(x)\right|_{2}^{q}dx\right)^{\frac{1}{q}}
\end{align*}
holds for
\begin{align}
T_{p}=\pi^{-\frac{1}{2}}n^{-\frac{1}{q}}\left(\frac{q-1}{n-q}\right)^{1-\frac{1}{q}}\left\{\frac{\Gamma\left(1+\frac{n}{2}\right)\Gamma\left(n\right)}{\Gamma\left(\frac{n}{q}\right)\Gamma\left(1+n-\frac{n}{q}\right)}\right\}^{\frac{1}{n}}\label{talenticonst},
\end{align}
where
$\left|\nabla u\right|_{2}=\left((\partial u/\partial x_{1})^{2}+(\partial u/\partial x_{2})^{2}+\cdots+(\partial u/\partial x_{n})^{2}\right)^{1/2}$,
and $\Gamma$ denotes the Gamma function.
\end{theoA}
The following corollary obtained from Theorem \ref{talentitheo} provides a simple bound for the embedding constant from $H_{0}^{1}\left(\Omega\right)$ to $L^{p}(\Omega)$ for a bounded domain $\Omega$.

\begin{coroA}\label{roughboundtheo}
Let $\Omega\subset \mathbb{R}^{n} (n=2,3,\cdots)$ be a bounded domain.
Let $p$ be a real number such that $p\in(n/(n-1),2n/(n-2)]$ if $n\geq 3$ and $p\in(n/(n-1),\infty)$ if $n=2$.
In addition, set $q=np/(n+p)$.
Then, $(\ref{embedding})$ holds for
\begin{align*}
	C_{p}\left(\Omega\right)=\left|\Omega\right|^{\frac{2-q}{2q}}T_{p},
\end{align*}
where $T_{p}$ is the constant in {\rm (\ref{talenticonst})}.
\end{coroA}

\begin{proof}
By zero extension outside $\Omega$, we may regard $u\in H_{0}^{1}\left(\Omega\right)$ as an element $u\in H_{0}^{1}\left(\mathbb{R}^{n}\right)$.
Therefore, from Theorem \ref{talentitheo},
\begin{align}
	\left\|u\right\|_{L^{p}\left(\Omega\right)}
	\leq T_{p}\left(\int_{\Omega}\left|\nabla u\left(x\right)\right|_{2}^{q}dx\right)^{\frac{1}{q}}.\label{embedding/theo/1}
\end{align}
H\"{o}lder's inequality gives
\begin{align}
\int_{\Omega}\left|\nabla u\left(x\right)\right|_{2}^{q}dx&
\leq\left(\int_{\Omega}
\left|
	\nabla u\left(x\right)
\right|_{2}^{q\cdot\frac{2}{q}}dx\right)^{\frac{q}{2}}\left(\int_{\Omega}1^{\frac{2}{2-q}}dx\right)^{\frac{2-q}{2}}\nonumber\\
&=\left|\Omega\right|^{\frac{2-q}{2}}
	\left(\int_{\Omega}\left|
	\nabla u\left(x\right)
\right|_{2}^{2}dx\right)^{\frac{q}{2}},\nonumber
\end{align}
that is,
\begin{align}
\left(\int_{\mathbb{R}^{n}}\left|\nabla u\left(x\right)\right|_{2}^{q}dx\right)^{\frac{1}{q}}
\leq\left|\Omega\right|^{\frac{2-q}{2q}}\left\|\nabla u\right\|_{L^{2}\left(\Omega\right)},\label{embedding/theo/2}
\end{align}
where $\left|\Omega\right|$ is the measure of $\Omega$. From (\ref{embedding/theo/1}) and (\ref{embedding/theo/2}), it follows that

\begin{align*}
\left\|u\right\|_{L^{p}\left(\Omega\right)}&
\leq\left|\Omega\right|^{\frac{2-q}{2q}}T_{p}\left\|\nabla u\right\|_{L^{2}\left(\Omega\right)}.
\end{align*}
\end{proof}

Using the following theorem, an upper bound of the embedding constant can be obtained when the minimal point of the spectrum of $-\Delta$ on $H_{0}^{1}(\Omega)$ is explicitly estimated, where $H_{0}^{1}(\Omega)$ is endowed with the inner product
\begin{align}
(\cdot,\cdot)_{H_{0}^{1}(\Omega)}:=(\nabla\cdot,\nabla\cdot)_{L^{2}(\Omega)}+\sigma(\cdot,\cdot)_{L^{2}(\Omega)}\label{sigmanorm}
\end{align}
with a nonnegative number $\sigma$.
Recall that we selected $\sigma =0$ when computing the numerical values in the last columns of Tables \ref{compare} and \ref{compareL}.
\begin{theoA}[M.~Plum \cite{plum2008}]\label{plumembedding}
Let $\rho\in[0,\infty)$ denote the minimal point of the spectrum of $-\Delta$ on $H_{0}^{1}(\Omega)$ endowed with the inner product \eqref{sigmanorm},
where $\sigma$ is selected so that $\sigma>0$ when $\rho=0$.

\noindent$a)$~~Let $n=2$ and $p\in[2,\infty).$
With the largest integer $\nu$ satisfying $\nu\leq p/2,\ (\ref{embedding})$ holds for
\begin{align*}
C_{p}\left(\Omega\right)=\left(\frac{1}{2}\right)^{\frac{1}{2}+\frac{2\nu-3}{p}}\left[\frac{p}{2}\left(\frac{p}{2}-1\right)\cdots\left(\frac{p}{2}-\nu+2\right)\right]^{\frac{2}{p}}\left(\rho+\frac{p}{2}\sigma\right)^{-\frac{1}{p}},
\end{align*}
where $\displaystyle \frac{p}{2}\left(\frac{p}{2}-1\right)\cdots\left(\frac{p}{2}-\nu+2\right)=1$ if $\nu=1.$\\[1pt]
$b)$~~Let $n\geq 3$ and $p\in[2,2n/(n-2)]$.
With $s:=n(p^{-1}-2^{-1}+n^{-1})\in[0,1],\ (\ref{embedding})$ holds for
\begin{align*}
C_{p}\left(\Omega\right)=\left(\frac{n-1}{\sqrt{n}\left(n-2\right)}\right)^{1-s}\left(\frac{s}{s\rho+\sigma}\right)^{\frac{s}{2}}.
\end{align*}
\end{theoA}
\section*{Acknowledgments}
The authors express their sincere thanks to Professor Y.~Watanabe (Kyushu University, Fukuoka, Japan) for referring them to useful literature on previously known results for the basis constructed by Legendre polynomials.
They also express their profound gratitude to an editor and two anonymous referees for their highly insightful comments and suggestions.

  The first author (K.T.) is supported by 
  the Waseda Research Institute for Science and Engineering, the Grant-in-Aid for Young Scientists (Early Bird Program).
The second author (K.S.) is supported by JSPS KAKENHI Grant Number 16K17651.
\bibliographystyle{elsarticle-num}
\bibliography{ref.bib}

\end{document}